\renewcommand{\paragraph}[1]{\par\medskip\noindent\textbf{#1}\ }
\def\Re{\mathbb R}
\providecommand{\remove}[1]{}
\theoremstyle{plain}
\newtheorem{theorem}{Theorem}[section]
\newtheorem{lemma}[theorem]{Lemma}
\newtheorem{proposition}[theorem]{Proposition}
\newtheorem{claim}[theorem]{Claim}
\newtheorem{corollary}[theorem]{Corollary}
\newtheorem{problem}[theorem]{Problem}
\newtheorem{observation}[theorem]{Observation}
\theoremstyle{definition}
\newtheorem{definition}[theorem]{Definition}
\theoremstyle{remark}
\newcommand{\G}{\mathcal{G}}
\newcommand{\E}{\mathcal{E}}
\renewcommand{\P}{\mathcal{P}}
\newcommand{\F}{\mathcal{F}}
\def\famG{\mathcal{G}}
\def\famH{\mathcal{H}}
\newcommand{\card}[1]{\lvert {#1} \rvert}
\renewcommand*\@fnsymbol[1]{\ensuremath{%
  \ifcase#1\or
    *\or
    \star\or     
    \ddagger\or
    \mathsection\or
    \mathparagraph\or
    \|\or
    **\or
    \star\star\or
    \ddagger\ddagger
  \else
    \@ctrerr
  \fi}}
\begin{document}
	
	\title{New Sufficient Conditions for Linear-Sized Epsilon-Nets and $(p,2)$-Theorems
    }
\author{Chaya Keller\thanks{School of Computer Science, Ariel University, Israel. \texttt{chayak@ariel.ac.il}. Research partially supported by the Israel Science Foundation (grant no. 1065/20).}
	\mbox{ }
	and Shakhar Smorodinsky\thanks{Department of Computer Science, Ben-Gurion University of the NEGEV, Be’er Sheva 84105, Israel. Research partially supported by the Israel Science Foundation (grant no. 1065/20). 
		\texttt{shakhar@bgu.ac.il}
}}

	\date{}
	\maketitle

	\begin{abstract}
An $\epsilon$-net theorem for a hypergraph upper bounds the minimum size of a vertex set that pierces all $\epsilon$-heavy hyperedges. A $(p,2)$-theorem bounds from above the minimum size of a vertex set that pierces all hyperedges, in terms of the maximum size of a set of pairwise disjoint hyperedges.
Numerous works studied $\epsilon$-net theorems and $(p,2)$-theorems that guarantee the existence of small-sized piercing sets.

We focus on the question: In which settings the asymptotically smallest possible piercing
sets --- i.e., $\epsilon$-nets of size
$O(\frac{1}{\epsilon})$ and piercing sets of size $O(p)$ in $(p,2)$-theorems, are guaranteed? We obtain several sufficient criteria for the existence
of such \emph{linear} $\epsilon$-net theorems and $(p,2)$-theorems that unveil interesting connections
to graph theory and improve and generalize several previous results. Most notably, we exhibit an 
unexpected relation of $\epsilon$-nets to the classical \emph{Zarankiewicz's problem} in graph theory. We show that a linear bound in the Zarankiewicz-type problem that asks for the maximum size of a bipartite graph with no copy of $K_{2,t}$, implies a linear $\epsilon$-net theorem for the corresponding neighborhood hypergraph. We also show that hypergraphs with a hereditarily linear-sized \emph{Delaunay graph} admit an almost linear $(p,2)$-theorem, and deduce that incidence hypergraphs of non-piercing regions in the plane admit a linear $(p,2)$-theorem, significantly improving previous results on such hypergraphs.  

Our work presents a landscape of sufficient conditions for the existence of linear $\epsilon$-net theorems and $(p,2)$-theorems, with complex interrelations between them. Many of the 
interrelations are still unknown and call for future research.  
	\end{abstract}

\section{Introduction}

\subsection{Background}

A hypergraph $H=(V,\E)$ consists of a set $V$ of vertices and a set $\E$ of subsets of $V$, called hyperedges. In this paper we focus on hypergraphs encountered in geometric instances, which we call in the sequel \emph{geometric hypergraphs}. A common structure of such a hypergraph $H=(V,\E)$ is where $V \subset \mathbb{R}^d$ and $\E$ encodes incidences between the points in $V$ and a family $\mathcal{S}$ of well-behaved shapes (e.g., halfspaces or axis-parallel boxes). That is, $\E=\{V \cap S: S \in \mathcal{S}\}$. 

A \emph{hitting set} (or a \emph{transversal}) is a set of vertices that pierces all hyperedges of the hypergraph -- i.e., $S \subset V$ such that for each $e \in \E$, $e \cap S \neq \emptyset$. The \emph{transversal number} of a hypergraph is the size of the smallest transversal.
A vertex set $S \subset V$ is \emph{shattered} by $\E$ if $\{e \cap S:e \in \E\} = 2^{S}$. The \emph{VC-dimension} of $H$ is the maximum size of a shattered set of vertices. Roughly speaking, most classes of geometric hypergraphs have a bounded VC-dimension (see~\cite{Matousek2002}).

\medskip \noindent \textbf{$\epsilon$-nets.} 
An \emph{$\epsilon$-net} for a hypergraph $H=(V,\E)$ with $|V|=n$ is a set of vertices that pierces every hyperedge of $H$ of size at least $\epsilon n$. The systematic study of $\epsilon$-nets was initiated by Haussler and Welzl \cite{HausslerW87}, who proved that any hypergraph with VC-dimension $d$ admits an $\epsilon$-net of size $O(\frac{d}{\epsilon} \log\frac{d}{\epsilon})$, and consequently, most geometric hypergraphs have small-sized $\epsilon$-nets.
In the last three decades, $\epsilon$-nets have become a central tool in diverse areas of computer science, including machine learning (e.g.,~\cite{Blumer1989}), algorithms (e.g.,~\cite{T.M.Chan2018}) and computational geometry (e.g.,~\cite{Arya2018}).

\medskip \noindent \textbf{Linear $\epsilon$-net theorems.} While the upper bound of Haussler and Welzl~\cite{HausslerW87} on the guaranteed size of an $\epsilon$-net is known to be essentially tight in general even for geometric hypergraphs (see~\cite{Komlos1992b,PachT13}), for various classes of geometric hypergraphs the guaranteed size of an $\epsilon$-net is much smaller, and in many cases even $\epsilon$-nets of size $O(\frac{1}{\epsilon})$ can be guaranteed. Besides their intrinsic interest, such improved bounds on the size of the $\epsilon$-net are useful for obtaining approximations for the \emph{hitting set} problem. Indeed, Br{\"{o}}nnimann and Goodrich~\cite{BronnimanG95} showed that an efficient construction of a \emph{linear-sized} $\epsilon$-net (i.e., an $\epsilon$-net of size $O(\frac{1}{\epsilon})$) implies a constant-factor approximation algorithm for the hitting set problem. As a result, numerous works (e.g.,~\cite{BusGMR16,ChanGKS12,MatousekSW90,MustafaDG18,PyrgaR08}) 
studied different aspects of 
the following general problem.
\begin{problem}\label{Q:Small-eps-net}
Determine all classes of hypergraphs that admit a linear-sized $\epsilon$-net.  
\end{problem}
Some of these works (e.g.,~\cite{ChanGKS12,MustafaDG18,PyrgaR08,Varadarajan09}) focused on sufficient conditions for having a linear-sized  $\epsilon$-net. The sufficient conditions included, e.g., having a \emph{linear support} and a \emph{linear shallow cell complexity} (see definitions below). 

\medskip \noindent \textbf{$(p,q)$-theorems.} A \emph{$(p,q)$ theorem} for a class $\famH$ of hypergraphs asserts that for any $H=(V,\E)$ in $\mathcal{H}$, if among any $p$ hyperedges in $\E$ some $q$ have a non-empty intersection, then $H$ admits a `small' hitting set. The original $(p,q)$-theorem was obtained by Alon and Kleitman~\cite{Alon1992} for the class of hypergraphs whose vertex set is $\mathbb{R}^d$ and whose hyperedges are 
finite families of convex sets (or in other words, for families of convex sets in $\mathbb{R}^d$); the size of the guaranteed hitting set is $c=c(d,p,q)$. Being a generalization of the classical Helly's theorem (that asserts the same for $p=q=d+1$, guaranteeing a hitting set of size $1$), this theorem may look as manifesting a property of convex sets. However, the $(p,q)$-theorem was generalized to a wide variety of non-convex settings and has found applications to various fields. For example, Matou\v{s}ek~\cite{Matousek04} proved a $(p,q)$-theorem for hypergraphs with a bounded VC dimension that has major applications in machine learning, computational geometry, and model theory (see, e.g.,~\cite{BK22,Kaplan24,RolnickS17}).  

\medskip \noindent \textbf{Linear $(p,2)$-theorems.}
An especially important class of $(p,q)$ theorems is $(p,2)$ theorems, that bound the transversal number $\tau$ of the hypergraph in terms of its packing number $\nu$ (i.e., the maximum number of pairwise disjoint hyperedges). Such theorems address the general problem of characterizing classes $\mathcal{H}$ of hypergraphs for which $\tau$ can be bounded as a function of $\nu$  --- a central problem in hypergraph theory that  includes several well-known conjectures, like \emph{Ryser's conjecture} (see~\cite{Matousek2002}) and 
\emph{Tuza's conjecture}~\cite{Tuza81}, as special cases. $(p,2)$-theorems were obtained for various classes of geometric hypergraphs~\cite{Chan2012,DJ11,Kar91,KellerS20,KNPS06}.
Obviously, the size of a hitting set guaranteed in a $(p,2)$-theorem cannot be less than $p-1$, and hence, a \emph{linear-sized} transversal -- i.e., a transversal of size $O(p)$, is the best one can hope to obtain. This gives rise to the following general problem. 
\begin{problem}\label{Q:Linear-p2}
Determine all classes of hypergraphs that admit a linear-sized $(p,2)$-theorem.  
\end{problem}
A class of hypergraphs that admits such a strong $(p,2)$-theorem is all hypergraphs whose vertex set is $\mathbb{R}^2$ and whose hyperedges encode incidences of points and  a family of \emph{pseudo-disks}~\cite{Chan2012,Pinchasi15} (see exact definition below).

\medskip \noindent \textbf{Zarankiewicz's problem.}
Given $n,t \in \mathbb{N}$, \emph{Zarankiewicz's problem} asks: What is the maximum number of edges in a bipartite graph $G=(A \cup B,E)$ on $n$ vertices that does not contain a copy of the complete bipartite graph $K_{t,t}$? This question is one of the central open problems in extremal graph theory (see~\cite{Sudakov2010}). The best known general upper bound is $O(n^{2-\frac{1}{t}})$, proved by K{\H o}v\'{a}ri, S{\'o}s and Tur{\'a}n~\cite{Koevari1954} over 70 years ago. In the last years, numerous works obtained better upper bounds for specific classes of graphs. A well-known result of this type is by Fox et al.~\cite{FoxPSSZ17} who proved a bound of $O(n^{2-\frac{1}{d}})$ for graphs whose \emph{neighborhood hypergraph} (namely, a hypergraph whose vertex set is $A$ and whose hyperedge set is $\{\{a\in A: (a,b) \in E\}: b\in B\}$; see detailed definition below) has VC-dimension $d$. A series of recent works obtained even stronger bounds for graphs encoding incidences of points and various classes of geometric objects (e.g.,~\cite{BasitCSTT21,ChalermsookOZ25,ChanH25,HunterMTS25,KellerS24,Tomon2021}).  
The \emph{lopsided} Zarankiewicz's problem is the generalized variant where the forbidden configuration is $K_{s,t}$. Numerous works studied this variant (e.g.,~\cite{Bukh24,Conlon21,KollarRS96}), and  in~\cite{ChanKS25} results in this setting were used to obtain better bounds in the \emph{hypergraph Zarankiewicz problem}~\cite{Erdos64}.

\subsection{Our results} In this paper we study Problems~\ref{Q:Small-eps-net} and~\ref{Q:Linear-p2}. We obtain new sufficient criteria for the existence
of \emph{linear} $\epsilon$-net theorems and $(p,2)$-theorems that uncover interesting connections to graph theory, in particular to Zarankiewicz's problem, and improve and generalize several previous results. 

\subsubsection{Linear lopsided Zarankiewicz bound implies linear $\epsilon$-net theorem}

In order to present our results, we need a few more definitions. 

Given a bipartite graph $G=(A\cup B,E)$ (with vertex parts
$A$ and $B$), the associated (primal) neighborhood hypergraph is the hypergraph $H_G$ whose vertex set is $A$, and for each $v \in B$, the set $N(v)=\{u \in A: \{u,v\}\in E \}$ is a hyperedge.
We consider classes of induced bipartite graphs $\famG=\{G[P\cup S] : P \subset \P,S \subset \mathcal{S} \}$ where $\P$ and $\mathcal{S}$ are given ground sets. The corresponding (primal) class of neighborhood hypergraphs is $\famH_{\famG}=\{H_G:G \in \famG\}$.
For $c \in \mathbb{N}$, a class $\G$ of bipartite graphs has VC-dimension $d$ if for every $G \in \G$, the VC-dimension of $H_{G}$ is at most $d$.

Given a hypergraph $H=(V,\E)$, the dual hypergraph $H^*$ is the hypergraph whose vertex set is $\E$ and each $v \in V$ defines a hyperedge that preserves the original incidences. Hence, for $G=(A\cup B,E)$, 
$H_G^*$ is the hypergraph whose vertex set is $B$, and for each $v \in A$, the set $N(v)=\{u \in B: \{v,u\}\in E\}$ is a hyperedge. The class $\famH_{\G}^*$ consists of the hypergraphs $\famH_{G}^*$ for $G \in \G$. Note that the property of having bounded VC-dimension is hereditary: If a class $\G$ of bipartite graphs has bounded VC-dimension $c$, then for every $G=(P\cup S,E) \in \G$ and for every induced bipartite subgraph $G'=G [{P' \cup S'}]$ where $P' \subset P, S' \subset S$, the VC-dimension of $H_{G'}$ is at most $c$.

\begin{definition}
		A class $\G$ of bipartite graphs admits a \emph{hereditarily linear lopsided Zarankiewicz theorem} if there exists $c=c(\G)$ such that for any $t_1,t_2 \in \mathbb{N}$, for every $G \in \G$, and for every finite induced graph $G[P \cup S]$, if $G[P \cup S] $ is $K_{t_1,t_2}$-free then $\card {E(G[P \cup S])} \leq c(t_2|P|+t_1|S|)$. 
\end{definition}

It was proved in \cite{ChanKS25} that the intersection graph of vertical and horizontal segments in the plane (i.e., the bipartite graph $G=(A \cup B,E)$ where $A$ is a family of horizontal segments, $B$ is a family of vertical segments, and for $a \in A, b \in B$, $(a,b) \in E$ if $a \cap b \neq \emptyset$) admits a hereditarily linear lopsided Zarankiewicz theorem. It seems plausible that the same holds for intersection graphs of $y$-monotone pseudo-disks and maybe even for any intersection graph of two families of pairwise disjoint curves (called strings); the techniques of~\cite{HunterMTS25} can be possibly used to show this. 


We show that somewhat surprisingly, a hereditarily linear lopsided Zarankiewicz theorem for a class of graphs implies a linear $\epsilon$-net theorem for the corresponding class of primal neighborhood hypergraphs.  
\begin{theorem}\label{thm:lopsided->epsnet}
	Let $c>0$ and $d \in \mathbb{N}$. Let $\G$ be a class of bipartite graphs with VC-dimension $d$ that admits a hereditarily linear lopsided Zarankiewicz theorem, with a constant $c(\mathcal{G})=c$. Then for every $G \in \G$, the hypergraph $H_G$ admits an $\epsilon$-net of size at most $\frac{c'}{\epsilon}$, where $c'$ depends only on $c,d$.
\end{theorem} 
In fact, our proof shows that it is sufficient to assume that the hereditarily linear lopsided Zarankiewicz theorem holds only \emph{with respect to containment of graphs of the form $K_{2,t}$ for all $t$}.

A few remarks are due. First, Mustafa et al.~\cite{MustafaDG18} showed that all linear $\epsilon$-net theorems known today hold in settings where the \emph{shallow cell complexity} of the hypergraph is $O(1)$. We do not know whether a hereditarily linear lopsided Zarankiewicz theorem for $\mathcal{G}$ implies that the shallow cell complexity of $H_G$ for all $G \in \G$ is $O(1)$ (see Problem 3 in Section \ref{sec:disc} below). If the answer is negative, then Theorem~\ref{thm:lopsided->epsnet} provides a new class of linear $\epsilon$-net theorems that is significantly different from all previously known classes. 

Second, the requirement that $\famG$ has a bounded VC-dimension can be omitted if the class $\famH^*_{\G}$ has a linear support (or even a hereditarily linear Delaunay graph), see\cite[Theorem 6]{AKP21}.
Third, in the opposite direction, the existence of a stronger variant of $\epsilon$-net called \emph{$\epsilon$-$t$-net} (see~\cite{AJKSY22}) was shown in~\cite{KellerS24} to imply upper bounds for Zarankiewicz's problem.

\subsubsection{Hereditarily linear Delaunay graph for the dual hypergraph implies an almost linear $(p,2)$-theorem for the primal}
The \emph{Delaunay graph} of a hypergraph $H=(V,\E)$ is the graph $G$ on the vertex set $V$ whose edges are $\{e \in \E: |e|=2\}$.
We say that a hypergraph $H=(V,\E)$ admits a hereditarily $c$-linear Delaunay graph for $c>0$, if for every induced subhypergraph $H{\mid}_{V'}$ ($V' \subset V$), the corresponding Delaunay graph $G_{V'}$ satisfies $|E(G_{V'})| < c \cdot |V'|$. Sometimes we simply say that the Delaunay graph is \emph{hereditarily linear}, and suppress the dependence on $c$. Similarly, we say that a class $\famH$ of hypergraphs admits a hereditarily $c$-linear Delaunay graph if each $H \in \famH$ has a hereditarily $c$-linear Delaunay graph.

We show that if a class of dual hypergraphs admits a hereditarily linear Delaunay graph, then the class of corresponding \emph{primal hypergraphs} admits a $(p,2)$-theorem with an almost linear transversal size.
\begin{theorem}\label{thm:2}
Let $c>0$. Let $\famH $ be a class of hypergraphs such that $\famH^*$ admits a hereditarily $c$-linear Delaunay graph. Then $\famH$ admits a $(p,2)$-theorem with a transversal of size $c'p \log p$, where $c'$ depends only on $c$. 
\end{theorem}

If in addition, $\famH$ admits a linear-sized $\epsilon$-net, then the assertion of Theorem \ref{thm:2} can be strengthened into a linear $(p,2)$-theorem.
\begin{corollary}\label{cor:3}
Let $c,c'>0$. Let $\famH $ be a class of hypergraphs such that $\famH$ admits an $\epsilon$-net of size $\frac{c}{\epsilon}$ and $\famH^*$ admits a hereditarily $c'$-linear Delaunay graph. Then $\famH$ admits a $(p,2)$-theorem with a transversal of size $c''p$, where $c''$ depends only on $c,c'$. 
\end{corollary}
The main tool in the proof of Theorem~\ref{thm:2} is an asymptotically optimal \emph{fractional Helly theorem}~\cite{Katchalski1979} for hypergraphs with a hereditarily linear Delaunay graph, which we incorporate into the Alon--Kleitman~\cite{Alon1992} proof technique of the $(p,q)$-theorem.
	
Let $H=(V,\E)$ be a hypergraph. We say that two vertices $u,v \in V$ are \emph{friends} if there exists some $e \in \E$ such that $u,v \in e$.
\begin{proposition}[A fractional Helly theorem for hypergraphs with a hereditarily linear Delaunay graph]\label{thm:1}
	Let $\famH$ be a class of hypergraphs with a hereditarily $c$-linear Delaunay graph, and let $0 < \alpha <1$. Then there exists $0<\beta = \beta (\alpha,c)<1$ such that for each $H=(V,\E) \in \famH$, if $\alpha \binom{n}{2}$ of the pairs in $\binom{V}{2}$ are friends, then there exists $e \in \E$ such that $|e|>\beta n$. Moreover, $\beta \geq c'\alpha$, where $c'$ depends only on $c$.
\end{proposition}

\subsubsection{Pseudo-disks, non-piercing regions and linear support} In order to compare our results with previous works, we discuss their implications on several well-studied classes of geometric hypergraphs.  

\medskip \noindent \textbf{Pseudo-disks and non-piercing regions.} A family $\F$ of simple Jordan regions in $\Re^2$ is called \emph{a family of pseudo-disks} if any two boundaries of members of $\F$ intersect at most twice. A generalization of this notion was presented by Raman and Ray~\cite{Raman2020}: A family $\F$ of regions in $\Re^2$ is \emph{non-piercing} if for every $F,G \in \F$, $G \setminus F$ is connected. The intersection hypergraph of two families $A,B$ of non-piercing regions has vertex set $A$ and hyperedges $\{w \in B: w \cap v \neq \emptyset\}$ for all $v \in A$. Special cases are the primal and dual incidence hypergraphs of points and a finite family $F$ of non-piercing regions. In the primal incidence hypergraph, the vertex set is $\mathbb{R}^2$ and the hyperedges are the elements of $F$. In the dual incidence hypergraph, the vertex set is $F$ and the hyperedges are all subsets of $F$ that have a non-empty intersection. 

\medskip \noindent \textbf{Planar support and $\alpha$-linear support.} For $\alpha>0$, a hypergraph $H=(V,\E)$ admits an \emph{$\alpha$-linear support} if for any $V' \subset V$ there exists a graph $G=G_{V'}$ defined on the vertex set $V'$, such that $|E(G_{V'})| \leq \alpha |V'|$, and for each hyperedge $e \in \E$, the induced graph $G[e]$ (i.e., the subgraph restricted to the vertices of $e$) is connected. For a class $\famH$ of hypergraphs, we say that $\famH$ admits an $\alpha$-linear support if each $H \in \famH$ admits an $\alpha$-linear support.
A hypergraph $H=(V,\E)$ admits a \emph{planar support} if in the above definition, instead of the condition $|E(G_{V'})| \leq \alpha |V'|$, it is required that $G_{V'}$ is a planar graph. Clearly, a planar support is a $3$-linear support.

\medskip \noindent \textbf{Relation between the notions.}
It was proved in \cite{Raman2020} (and later reproved in \cite{Dalal24}) that 
the intersection hypergraph of two families of non-piercing regions
admits a planar (and hence, a 3-linear) support. In particular, this holds for 
the intersection hypergraph of any two families of pseudo-disks. In addition, it is clear from the definition that if a class $\famH$ of hypergraphs admits a linear support, then it admits a hereditarily linear Delaunay graph. These relations allow comparing our results to several previous works.

\medskip \noindent \textbf{Comparison to~\cite{Huang25,Pal25}.}  Very recently,  P\'{a}lv\"{o}lgyi and Z\'{o}lomy~\cite{Pal25} and Huang et al.~\cite{Huang25} proved $(p,2)$-theorems for the class of primal incidence hypergraphs of points and families of non-piercing regions. The transversal size in~\cite{Huang25}, which improves over the results of~\cite{Pal25}, is $O(p^9)$.

As incidence hypergraphs of points and families of non-piercing regions admit a planar support (by the result of Raman and Ray~\cite{Raman2020}) and thus admit a hereditarily $3$-linear Delaunay graph, Corollary~\ref{cor:3} above implies the following \emph{linear} $(p,2)$-theorem for this class of hypergraphs. 
\begin{corollary}\label{cor:4}
There exists $c>0$ such that for any family $\F$ of non-piercing regions in the plane, the primal incidence hypergraph of points and regions in $\F$ admits a $(p,2)$-theorem with a transversal of size $cp$.  
\end{corollary}
Hence, our Theorem~\ref{thm:2} and Corollary~\ref{cor:3} significantly improve (from $O(p^9)$ to $O(p)$) and generalize (from non-piercing regions to hypergraphs that admit a hereditarily linear Delaunday graph) the results of~\cite{Huang25,Pal25}. We note that  Corollary~\ref{cor:4} (but not the more general Theorem~\ref{thm:2} and Corollary~\ref{cor:3}) can be obtained also by combining the results of~\cite{Huang25,Pal25} with a previous result of Dalal et al.~\cite{Dalal24}, as we show below in Proposition~\ref{obs:O(p)}.

\medskip \noindent \textbf{Comparison to~\cite{PyrgaR08}.} Pyrga and Ray~\cite{PyrgaR08} proved that the existence of a linear support\footnote{In fact, \cite{PyrgaR08} assumes a slightly stronger condition than admitting a linear support. In Appendix \ref{app} we show that their assumption can be weakened.} for the dual hypergraph $H^*$ implies the existence of a linear $\epsilon$-net for the hypergraph $H$.
We show that the assumption that the class $\famH_{\famG}^*$ has a linear support implies a linear lopsided Zarankiewicz theorem for the family $\famG$, with respect to containment $K_{t,2}$ (for all $t$).
\begin{proposition}\label{thm:supp->Zaran}
	Let $\alpha>0$. Let $G=(P \cup S, E)$ be a bipartite graph, $|P|=n,|S|=m$, such that the dual neighborhood hypergraph $H_G^*=(S,P)$ admits an $\alpha$-linear support. If $G$ is $K_{t,2}$-free then $|E|\leq n+\alpha (t-1)m$. 
\end{proposition}
 This shows that Theorem~\ref{thm:lopsided->epsnet} generalizes the result of Pyrga and Ray~\cite{PyrgaR08} from the class of hypergraphs whose dual hypergraph admits a linear support to the wider class of neighborhood hypergraphs of bipartite graphs that admit a linear lopsided Zarankiewicz theorem.

\medskip \noindent \textbf{Comparison to~\cite{ChanH25,HunterMTS25,KellerS24}.} Several works studied Zarankiewicz's problem for intersection graphs of two families of pseudo-disks. Keller and Smorodinsky~\cite{KellerS24} proved a linear upper bound with respect to containment of $K_{t,t}$ for all $t$, improving over an $O(n\log \log n)$ bound of Chan and Har-Peled~\cite{ChanH25}. Hunter et al.~\cite{HunterMTS25} obtained a linear bound in an alternative way. The proofs in these works are quite elaborate, and we are not aware of a way to simplify them significantly even in the most basic case of avoiding containment of $K_{2,2}$.  

Proposition~\ref{thm:supp->Zaran} gives a very easy proof of a linear bound for the lopsided Zarankiewicz problem with respect to containment of $K_{t,2}$ for bipartite graphs whose dual neighborhood hypergraph admits a linear support -- a class that includes intersection graphs of two families of pseudo-disks (as was shown above), as well as other families of hypergraphs. Thus, comparing our results in the $K_{2,2}$ setting with the results of~\cite{ChanH25,HunterMTS25,KellerS24} on Zarankiewicz's problem for intersection graphs of pseudo-disks, our results provide a much simpler proof that holds in a more general setting (e.g., including not only pseudo-disks but also non-piercing regions). 
While the `avoiding $K_{2,2}$' setting is somewhat limited, in some cases a linear bound in this setting can be leveraged to a linear bound for the entire lopsided Zarankiewicz's problem -- for example, in families of graphs that have the \emph{density Erd\H{o}s-Hajnal property}, as was shown in~\cite{HunterMTS25}. 
 
\subsection{A figure summarizing our results and open problems} Our work presents a landscape of sufficient conditions for the existence of linear $\epsilon$-net theorems
and $(p,2)$-theorems, with complex interrelations between them. There conditions are demonstrated in Figure~\ref{fig:fig1}.

As can be seen in the figure, some of the interrelations are still unknown and call for future research. We discuss these open problems in Section~\ref{sec:disc}.

	\begin{figure}[ht]
		\begin{center}
			\scalebox{1.22}{
				\includegraphics[width=0.8\textwidth]{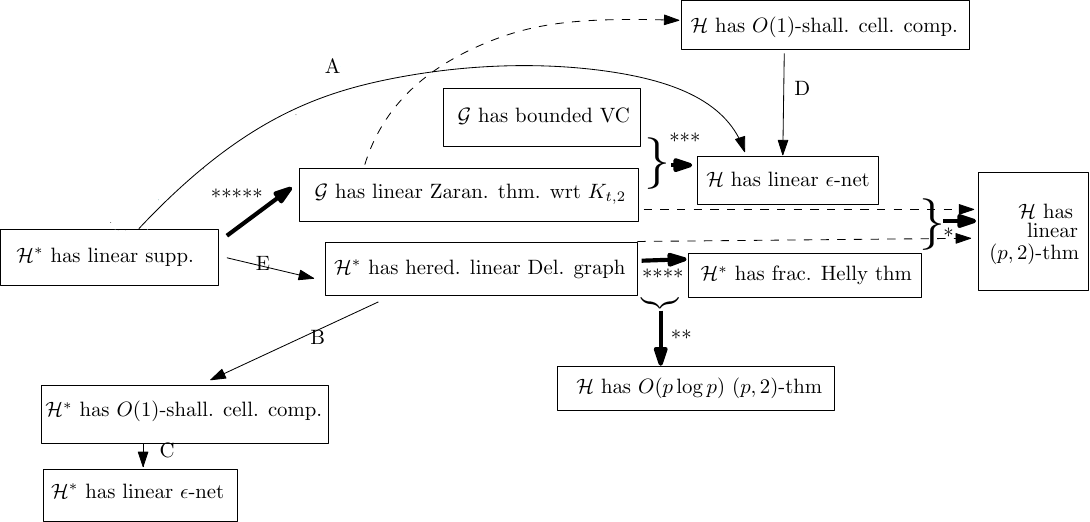}
			}
			\caption{The diagram depicts the relationships between the various problems discussed in this paper. Here, $\G$ is a class of bipartite graphs and $\famH=\famH_{\G}$ is the class of the corresponding primal neighborhood hypergraphs. Our main results are indicated by bold arrows. 
            The references to the text are as follows: * = Corollary \ref{cor:3}, ** = Theorem \ref{thm:2}, *** =Theorem \ref{thm:lopsided->epsnet}, **** = Proposition~\ref{thm:1}, ***** = Proposition~\ref{thm:supp->Zaran}.
            The normal-line arrows illustrate previously known relations. In particular, `A' is the main result of \cite{PyrgaR08}, `B' is mentioned in \cite{Raman2020} and is discussed in Section \ref{sec:disc}, `C' and `D' are proved in~\cite{ChanGKS12,MustafaDG18} and are discussed in Section \ref{sec:disc}, and `E' follows immediately from the definitions. The dashed arrows denote open problems that are discussed in Section \ref{sec:disc}.}
			\label{fig:fig1}
		\end{center}
	\end{figure}

\section{From Zarankiewicz's Problem to Linear $\epsilon$-Nets}

In this section we prove Theorem~\ref{thm:lopsided->epsnet} and Proposition~\ref{thm:supp->Zaran} that relate bounds for the lopsided Zarankiewicz problem with linear $\epsilon$-net theorems and admitting a planar support.


\medskip

\noindent \textbf{Theorem \ref{thm:lopsided->epsnet} - Restatement.}
	If a class $\G$ of bipartite graphs with bounded VC-dimension admits a hereditarily linear lopsided Zarankiewicz theorem (even only w.r.t.~containment of $K_{t,2}$ for any $t$), then for every $G \in \G$, the hypergraph $H_G$ admits an $\epsilon$-net of size $O(\frac{1}{\epsilon})$.

\begin{proof}
	Let $\epsilon>0$ and let $ 1 \leq c=c(\G)$ be a constant such that for every $G \in \G$ and for every finite induced graph $G'=G[P'\cup S']$, if $G'$ is $K_{t_1,t_2}$-free, then $\card {E(G')} \leq c(t_2|P'|+t_1|S'|)$. Let $\beta = \frac{1}{2c}$.
	
	Let $G=(P\cup S,E) \in \G$ with $|P|=n$, and let $\{s_1,\ldots,s_x\} \subset S$ be an (inclusion) maximal subset of $S$ with the following properties:
	\begin{itemize}
		\item $\forall 1 \leq i \leq x$, $\epsilon n \leq \deg_Gs_i<2 \epsilon n$;
			\item $\forall 1 \leq i<j \leq x$, $|N(s_i) \cap N(s_j)|< \beta \epsilon n$.
	\end{itemize}
Since $\G$ has bounded VC-dimension, for each $1 \leq i \leq x$ the hypergraph $H_{G[{N(s_i) \cup S}]}$ admits a $(\frac{1}{2} \beta)$-net $N_i$ of a constant size.

First, we claim that $N=\bigcup_{i=1}^x N_i$ intersects each hyperedge of $H_G$ whose size is between $\epsilon n$ and $2 \epsilon n$. Indeed, for each $s \in \{s_1,\ldots,s_x\}$, we have $N \cap N(s) \neq \emptyset$. On the other hand, by the maximality of $\{s_1,\ldots,s_x\} $, for each $s \in S \setminus \{s_1,\ldots,s_x\} $ with $\epsilon n \leq \deg_G s \leq 2\epsilon n$, there exists $1 \leq i \leq x$ such that $|N(s_i) \cap N(s)| \geq \beta \epsilon n$. But since $\beta \epsilon n \geq \frac{1}{2} \beta \deg_Gs_i = \frac{1}{2} \beta |N(s_i)|$, we have $N_i \cap N(s) \neq \emptyset$. Hence $N$ intersects each hyperedge of $H_G$ whose size is between $\epsilon n$ and $2 \epsilon n$. 

Second, we prove that $|N|=O(\frac{1}{\epsilon})$. Since each $N_i$ is of a constant size, we just need to prove that $x=O(\frac{1}{\epsilon})$. To see this, note that 
\begin{equation}\label{eq:1}
	\epsilon n x \leq |E(G [{P \cup \{s_1,\ldots,s_x\}}])| \leq c(2n + \beta \epsilon n x).
\end{equation}
The left inequality holds since $\deg_Gs_i=|N(s_i)| \geq \epsilon n$ for all $1 \leq i \leq x$. The right inequality holds since the graph $G [{P \cup \{s_1,\ldots,s_x\}}]$ is 
$K_{\beta \epsilon n,2}$-free by construction (every two neighborhoods of some $s_i$ and $s_j$ intersect in $< \beta \epsilon n$ vertices) and $\G$ admits a hereditarily linear lopsided Zarankiewicz theorem. By the choice of $\beta$, the inequality (\ref{eq:1}) implies $x\leq \frac{4c}{\epsilon}$.

\medskip

So far we obtained a piercing set of size $O(\frac{1}{\epsilon})$ only for the hyperedges of size between $\epsilon n $ and $2 \epsilon n$. In order to obtain a linear $\epsilon$-net for the whole hypergraph $H_G$, we apply the same argument recursively with $2 \epsilon, 4 \epsilon, 8 \epsilon \ldots$ to pierce the higher-order hyperedges, and the total size of the nets we form in each step is $O(\frac{1}{\epsilon}+\frac{1}{2 \epsilon}+\frac{1}{4 \epsilon}+\ldots)=O(\frac{1}{\epsilon})$ as asserted.
\end{proof}

The proof method of Theorem \ref{thm:lopsided->epsnet} is partially based on the strategy of~\cite{PyrgaR08}, where the existence of a linear support
for the dual hypergraph is assumed instead of our Zarankiewicz-type assumption. We now prove Proposition~\ref{thm:supp->Zaran} which asserts that existence of a linear support implies a linear bound for the lopsided Zarankiewicz problem with respect to containment of $K_{t,2}$, for all $t$. Since the proof of Theorem~\ref{thm:lopsided->epsnet} uses the existence of a linear lopsided Zarankiewicz theorem only with respect to avoiding $K_{t,2 }$ (and not the general version where the forbidden subgraph is a general $K_{t_1,t_2 }$), this implies that Theorem~\ref{thm:lopsided->epsnet} generalizes the result of~\cite{PyrgaR08}.

\medskip \noindent \textbf{Proposition \ref{thm:supp->Zaran} - Restatement.}
	Let $G=(P \cup S,E)$ be a bipartite graph, $|P|=n,|S|=m$, such that the dual hypergraph $H^*=(S,P)$ admits an $\alpha$-linear support. If $G$ is $K_{t,2}$-free then $|E|\leq n+\alpha (t-1)m$.

\begin{proof}
	Let $X_p$ denote the number of edges of the $\alpha$-linear support induced on the dual hyperedge $p^*=\{s \in S: \{p,s\} \in E\}$. Then since the restriction of the $\alpha$-linear support to $p^*$ is connected, we have $X_p \geq \deg_Gp-1$. It follows that $\Sigma_{p \in P} X_p \geq (\Sigma_{p \in P} \deg_Gp) -n =|E|-n$.
	
	On the other hand, $\Sigma_{p \in P} X_p \leq \alpha m (t-1)$ since the $\alpha$-linear support of $H^*=(S,P)$ has at most $\alpha m$ edges and each such edge participates in less than $t$ dual hyperedges $p^*$ by the $K_{t,2}$-freeness assumption. These two inequalities together imply the assertion. 
\end{proof}

\section{From a Hereditarily Linear Delaunay Graph to $(p,2)$-Theorems}

\subsection{A linear fractional Helly theorem}

Our starting point in this section is Proposition \ref{thm:1} -- a fractional Helly theorem for hypergraphs with a hereditarily linear Delaunay graph that seems to be of independent interest. 

Recall that for a hypergraph $H=(V,\E)$, two vertices $p,q \in V$ are called \emph{friends} if there exists some $e \in \E$ such that $p,q \in e$.

\medskip \noindent \textbf{Proposition~\ref{thm:1} - restatement.}
	Let $\famH$ be a class of hypergraphs with a hereditarily $c$-linear Delaunay graph, and let $0 < \alpha <1$. Then there exists $0<\beta = \beta (\alpha,c)<1$ such that for each $H=(V,\E) \in \famH$, if $\alpha \binom{n}{2}$ of the pairs in $\binom{V}{2}$ are friends, then there exists $e \in \E$ such that $|e|>\beta n$. Moreover, $\beta = \Omega(\alpha)$.

\medskip We call such a theorem a \emph{linear} fractional Helly theorem since it guarantees a linear dependence of $\beta$ on $\alpha$, which is clearly the `best' dependence one can hope for.

\medskip

To prove the proposition, we need the following corollary of a result of Ackerman et al.~\cite[Lemma 22]{AKP21}. 
\begin{lemma}\label{lem:friends}
	Let $H=(V,\E)$ be a hypergraph with hereditarily $c$-linear Delaunay graph. Let $k=\max_{e \in \E}|e|$ and let $X=\{  \{ p,q\} : p,q \in V \mbox{ are friends} \}$. Then $|X|=O(nk)$.
\end{lemma}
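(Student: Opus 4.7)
The plan is a short probabilistic sampling argument: pick a sampling rate of order $1/k$ so that hyperedges of size at most $k$ are likely to shrink to a pair under restriction, transfer friend pairs in $X$ into edges of the Delaunay graph of the sampled induced subhypergraph, and then invoke the hereditary $c$-linearity hypothesis directly.

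Concretely, I would assume $k \ge 2$ (otherwise $X=\emptyset$ and the bound is trivial), set $r=1/k$, and form $S\subseteq V$ by retaining each vertex independently with probability $r$, so that $\mathbb{E}[|S|]=rn$. For each friend pair $\{p,q\}\in X$, fix a witnessing hyperedge $e_{pq}\in\E$ of minimum cardinality $m_{pq}\le k$ containing $\{p,q\}$. If the sample $S$ happens to include $p$ and $q$ but none of the other $m_{pq}-2$ vertices of $e_{pq}$, then $e_{pq}\cap S=\{p,q\}$ is a size-$2$ induced hyperedge of $H{\mid}_S$, so $\{p,q\}$ is an edge of the Delaunay graph $G_S$. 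The probability of this event is $r^2(1-r)^{m_{pq}-2}\ge r^2(1-r)^{k-2}\ge r^2/e$, using $(1-1/k)^{k-2}\ge 1/e$ for $k\ge 2$.

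By linearity of expectation, $\mathbb{E}[|E(G_S)|]\ge |X|\cdot r^2/e$. On the other hand, the hereditary $c$-linearity hypothesis gives $|E(G_S)|<c|S|$ pointwise, so $\mathbb{E}[|E(G_S)|]<crn$. Combining the two and substituting $r=1/k$ yields $|X|\le ecnk=O(nk)$, as desired.

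The only subtle point I expect is justifying the lower bound on $\mathbb{E}[|E(G_S)|]$: for distinct pairs $\{p,q\}\in X$ the events that each becomes a Delaunay edge of $G_S$ are not disjoint (different pairs may even be witnessed through the same hyperedge), but linearity of expectation sums indicator probabilities rather than a probability of a disjoint union, so the bound is legitimate — each pair contributes at most once because $G_S$ is a simple graph. Beyond this bookkeeping, the design choice of the sampling rate $r$ of order $1/k$ is the only nonroutine move; the constant $c$ enters only multiplicatively. The argument is essentially a distillation of \cite[Lemma 22]{AKP21} specialized to this hypergraph setting.
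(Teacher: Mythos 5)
Your proposal is correct and follows essentially the same route as the paper: sample each vertex independently with probability of order $1/k$, lower-bound the expected number of Delaunay edges of the induced subhypergraph via the surviving friend pairs, upper-bound it by the hereditary $c$-linearity, and compare (the paper takes $x=\frac{1}{k-2}$ instead of $\frac1k$, an immaterial difference). Your handling of the $k=2$ case and the explicit witnessing-hyperedge bookkeeping is a slightly more careful writeup of the same argument.
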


\begin{proof}[Proof-sketch]
The proof is a standard probabilistic argument. Form a random subset $V' \subset V$ by picking each vertex of $V$ independently with probability $x$. Let $G'$ be the Delaunay graph of $H{\mid}_{V'}$. By passing to the expectation, we obtain $(1 - x)^{k-2} x^2 |X| \leq \mathbb{E} ( | E(G')|) \leq c x n$, and for $x = \frac{1}{k-2}$ this yields $|X|=O(nk)$ as asserted.
\end{proof}

\medskip \noindent Now we are ready to prove Proposition \ref{thm:1}. 

\begin{proof}[Proof of Proposition \ref{thm:1}]
	Let $H=(V,\E)\in \famH$. Let $k=\max_{e \in \E}|e|$. Let $X=\{ \{p,q\} : p,q \in V \mbox{ are friends}   \}$. Then by assumption, $|X| \geq \alpha \binom{n}{2}$ and by Lemma \ref{lem:friends}, $|X|=O(nk)$. It follows that $k=\Omega (\alpha n)$, and the assertion follows.
\end{proof}

\subsection{An almost linear $(p,2)$-theorem}

Recall that our $(p,2)$-theorem asserts the following.

\medskip \noindent \textbf{Theorem~\ref{thm:2} - Restatement.}
Let $\famH $ be a class of hypergraphs such that $\famH^*$ admits a hereditarily linear Delaunay graph. Then $\famH$ admits a $(p,2)$-theorem with a transversal of size $O(p \log p)$.

\medskip

Our proof of Theorem \ref{thm:2} follows the strategy of the Alon-Kleitman proof of the $(p,q)$-theorem \cite{Alon1992}. Recall that the Alon-Kleitman proof has four steps:
\begin{enumerate}
    \item Proving that the number of intersecting $q$-tuples of hyperedges in the hypergraph is `large';

    \item Using a fractional Helly theorem to deduce the existence of a `deep' vertex -- i.e., a vertex that belongs to many hyperedges;
    
    \item Using the linear programming (LP) duality lemma to deduce the existence of a finite weighted set $Q$ of vertices such that every hyperedge contains a subset of $Q$ of a `large' weight;
    
    \item Applying the \emph{weak $\epsilon$-net theorem}~\cite{AlonBFK92} to the primal hypergraph induced on the vertex set $Q$ to deduce the existence of a small-sized trasversal.
\end{enumerate}

The analogue of the first step in our proof is showing that a hypergraph that satisfies the assumption of Theorem~\ref{thm:2} contains `many' intersecting pairs of edges. This is obtained using the following corollary of Tur\'{a}n's theorem.
\begin{lemma}\label{lem:Turan}
    Let $G=(V,E)$ ($|V|=n$) be a graph such that every subset $V' \subset V$ with $|V'|=p$ contains some edge of $E(G)$. Then $|E(G)| \geq \frac{n^2}{2(p-1)}-\frac{n}{2}$.
\end{lemma}

\begin{proof}
Consider the complement graph $\Bar{G}$. This graph does not contain $K_p$, hence by Tur\'{a}n's theorem $|E(\Bar{G})| \leq (1-\frac{1}{p-1})\frac{n^2}{2}$, and it follows that $|E(G)|=\binom{n}{2}-|E(\Bar{G})| \geq \frac{n^2}{2(p-1)}-\frac{n}{2}$.
\end{proof}

The analogue of the second step is applying the fractional Helly theorem (i.e., Proposition~\ref{thm:1} above). For the third step, we recall the LP-duality lemma.


\medskip \noindent \textbf{LP duality lemma.} 
Let $0 < \alpha <1$ and let $(P,S)$ be a set system such that for any multiset $S'$ of elements of $S$, there exists $p \in P$ contained in at least $\alpha |S'|$ elements of $S'$. Then there exists a finite multiset $Q \subset P$ such that each member of $S$ contains at least $\alpha |Q|$ elements of $Q$. 

\medskip

In the fourth step, we replace the weak $\epsilon$-net theorem used in the Alon-Kleitman proof with application of the aforementioned \emph{$\epsilon$-net theorem} of Haussler and Welzl~\cite{HausslerW87} (improved by Koml\'{o}s, Pach and Woeginger~\cite{Komlos1992b}).  

\medskip \noindent \textbf{The $\epsilon$-net theorem (\cite{HausslerW87,Komlos1992b}).} Let $\epsilon>0$. Any hypergraph $H=(V,\E)$ with VC-dimension $d$ admits an $\epsilon$-net of size at most $O \left(\frac{d}{\epsilon} \log \frac{1}{\epsilon} \right)$.


\medskip In the proof of  Theorem~\ref{thm:2} and in the sequel, we use the following standard definition. A hypergraph $H = (V,\E)$ \emph{admits the $(p,2)$-property} if among any $p$ hyperedges in $\E$, some two intersect. 

\begin{proof}[Proof of Theorem \ref{thm:2}]
Let $H=(P,S) \in \famH$ with $|S|=n$.
Assume that $H$ admits the $(p,2)$-property. 
	Let $G$ be a graph whose vertex set is $S$ and $(s_1,s_2)$ is an edge if $s_1 \cap s_2 \neq \emptyset$. Then by the $(p,2)$-property of $H$, the condition of Lemma \ref{lem:Turan} is satisfied, and the lemma implies that at least $\Omega(\frac{n^2}{p})$ pairs of hyperedges in $S$ intersect. Then by applying Theorem \ref{thm:1} to $H^*$, we get that there exists $p \in P$ that is contained in $\Omega(\frac{n}{p})$ hyperedges in $S$. For every multiset $S'$ of elements of $S$, the hypergraph $H'=(P,S')$ satisfies the $(p,2)$-property as well, and hence the LP-duality lemma implies the existence of a finite multiset $Q$ of elements of $P$, such that each hyperedge in $S$ contains at least $\frac{|Q|}{p}$ points from $Q$.
	
	$H^*$ has a bounded VC-dimension, being a hypergraph with a hereditarily linear Delaunay graph, see \cite[Theorem 6]{AKP21}. Thus, the primal hypergraph $H=(P,S)$ has a bounded VC-dimension too, and so does $H(Q,S)$.
	Applying the $\epsilon$-net theorem to the hypergraph $H(Q,S)$, with $\epsilon=\frac{1}{p}$, we obtain a subset of $Q$ of size $O(p \log p)$ which is a transversal for $H=(P,S)$. This completes the proof.      
\end{proof} 

\subsection{A linear $(p,2)$-theorem and an application to non-piercing regions}

If $\famH$ admits a linear-sized $\epsilon$-net, then Theorem \ref{thm:2} can be strengthened to a linear $(p,2)$-theorem. 
This is stated in Corollaries \ref{cor:3} and~\ref{cor:4}, which we prove together.

\medskip

\noindent \textbf{Corollaries \ref{cor:3} and \ref{cor:4} -- Restatement.}
Let $\famH $ be a class of hypergraphs such that $\famH^*$ admits a hereditarily linear Delaunay graph and $\famH$ admits a linear-sized $\epsilon$-net. Then $\famH$ admits a $(p,2)$-theorem with a transveral of size $O(p)$. 

In particular, any family $\F$ of non-piercing regions in the plane that satisfies the $(p,2)$-property, admits a transversal of size $O(p)$.

\begin{proof}[Proof of Corollaries \ref{cor:3} and \ref{cor:4}]
The first assertion follows immediately from the proof of Theorem \ref{thm:2}, as in the fourth step, the assumption that $\famH$ admits a linear-sized $\epsilon$-net can be used instead of the $\epsilon$-net theorem to yield a transversal of size $O(p)$.

In the special case of non-piercing regions, 
the dual hypergraph of $\F$ admits a hereditarily planar, and hence $3$-linear, Delaunay graph. Indeed, the dual hypergraph $H^*=(\F,\Re^2)$ of $\F$ admits a planar support (as was proved in \cite{Raman2020} and later reproved in \cite{Dalal24}). Therefore, 
the Delaunay graph of $H^*$ is clearly a subgraph of a planar graph and hence is 3-linear. In addition, the primal hypergraph $H$ of a family of non-piercing regions admits an $\epsilon$-net of size $O(\frac{1}{\epsilon})$, by \cite[Theorem 24]{AJKSY22} (based on \cite{PyrgaR08}). Therefore, the proof of Theorem \ref{thm:2} can be applied with the linear sized $\epsilon$-net, and the assertion of the corollary follows.  
\end{proof}

We note that Corollary \ref{cor:4} (but not Theorem~\ref{thm:2} and Corollary~\ref{cor:3}) follows also from a combination of the polynomial $(p,2)$-theorem for families of non-piercing regions proved in \cite{Huang25,Pal25} with the following lemma of Dalal et al. \cite[using a result of \cite{Pinchasi2014}]{Dalal24}.

\begin{lemma}\cite[Theorem 3]{Dalal24}\label{lem:157}
	Each family $\F$ of non-piercing regions contains some region $B \in \F$ such that the family $\{  A: A \in \F, A \cap B \neq \emptyset\}$ has the (157,2)-property.
\end{lemma}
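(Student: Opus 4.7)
Since Lemma~\ref{lem:157} is credited to \cite{Dalal24} (following \cite{Pinchasi2014}), I would take it as a black box rather than reprove it here. Still, in the spirit of sketching an independent proof, I would first reformulate the claim: the $(157,2)$-property for $\F_B := \{A \in \F : A \cap B \neq \emptyset\}$ is equivalent to saying that no $157$ members of $\F_B$ are pairwise disjoint, i.e.\ that the ``local packing'' around some $B \in \F$ is at most $156$. So the plan is to exhibit $B \in \F$ whose ``local packing'' is an absolute constant.

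The natural route goes through the well-known structural fact (closely related to the hereditarily linear Delaunay property invoked elsewhere in the paper) that the intersection graph $G(\F)$ of a non-piercing family has at most $cn$ edges for an absolute constant $c$. By averaging, some $B \in \F$ has degree at most $2c$ in $G(\F)$, so $|\F_B| \leq 2c + 1$; if $c$ is small enough that this bound is $\leq 156$, the $(157,2)$-property holds vacuously and one is done. To obtain the explicit constant $157$, the Dalal--Pinchasi route is to pick $B$ extremally (for instance, innermost in a suitable auxiliary arrangement of $\F$), and then, given pairwise-disjoint $A_1, \dots, A_k \in \F_B$, cyclically order them along $\partial B$. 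The non-piercing condition ``$A_i \setminus B$ is connected'' restricts the combinatorial type of consecutive interactions, and an Euler-characteristic computation on the resulting planar contact structure caps $k$ by an explicit constant.

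\textbf{Main obstacle.} Turning ``bounded local packing'' into the explicit numerical constant $157$ is the delicate step: it requires a tight edge-density bound on $G(\F)$ combined with a careful charging argument on the cyclic arrangement around $\partial B$, rather than just an unspecified $O(1)$. Because \cite{Dalal24} already performs this bookkeeping, the cleanest path in the present paper is to appeal to their statement directly instead of reproducing the detailed combinatorics; the lemma is then free to be plugged into the greedy peeling that yields the announced linear-sized transversal in Corollary~\ref{cor:3}.
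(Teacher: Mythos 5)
Your bottom line---treat the lemma as a black box and cite \cite{Dalal24}---is exactly what the paper does: it offers no proof of Lemma~\ref{lem:157}, only the citation, and your reformulation of the $(157,2)$-property as ``no $157$ pairwise disjoint members of $\F_B$'' is correct. So as far as the paper is concerned, your treatment matches.

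However, the ``independent proof'' you sketch has a genuine flaw in its main route. The intersection graph of a non-piercing family is \emph{not} sparse: take $n$ unit discs all containing the origin (discs are pseudo-discs, hence non-piercing); every pair intersects, so $G(\F)$ is complete with $\binom{n}{2}$ edges, and every $B$ has degree $n-1$. The graph that is hereditarily linear for non-piercing regions is the Delaunay graph of the associated (dual) hypergraph---pairs realizable as a hyperedge of size exactly two---not the intersection graph, and the paper is careful to use only that. Consequently your averaging step (``some $B$ has degree at most $2c$, so $|\F_B|\leq 2c+1$ and the property holds vacuously'') collapses; indeed the whole point of the Dalal--Pinchasi lemma is that $\F_B$ may be huge, and what is bounded is its \emph{packing number} (the maximum number of pairwise disjoint members), not its cardinality or the degree of $B$. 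Your second, extremal-choice-plus-Euler-characteristic sketch gestures at the actual argument of \cite{Pinchasi2014,Dalal24}, but as written it supplies no mechanism for bounding $k$, so it cannot stand in for the citation. Since the paper itself only quotes the result, the honest options are either to cite it (as you and the paper both do) or to reproduce the Pinchasi-style topological argument in full; the averaging shortcut is not available.
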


\begin{proposition}
    \label{obs:O(p)}
	Any family $\F$ of non-piercing regions that admits the $(p,2)$-property, has a transversal of size $O(p)$.
\end{proposition}

\begin{proof}[Proof of Proposition \ref{obs:O(p)}]
	By Lemma \ref{lem:157}, there exists $B \in \F$ such that the set $\F_1=\{ A: A \in \F , A \cap B \neq \emptyset\}$ satisfies the (157,2)-property. By~\cite{Huang25} or~\cite{Pal25}, $\F_1$ can be pierced by $C$ points, for some fixed constant $C$. Removing $\F_1$, the remaining family $\F \setminus \F_1$, satisfies the $(p-1,2)$-property. We continue in the same manner at most $p-1$ times, and obtain a transversal of $\F$ that contains at most $C(p-1)$ points. 
\end{proof}

\section{Discussion and Open Problems}\label{sec:disc}

In this paper we discussed several sufficient conditions for the existence of linear $\epsilon$-net theorems and linear $(p,2)$-theorems. While the relations between some of these conditions are clear or were established in previous works or in our paper, many of the relations are unknown and call for future work. Thus, we conclude the paper with a list of open problems. 

\begin{problem}
    Does the existence of a hereditarily linear lopsided Zarankiewicz theorem for some class $\G$ of graphs imply the existence of a linear $(p,2)$-theorem for $\famH_{\G}$?
\end{problem}
The motivation behind this problem is as follows. Pyrga and Ray~\cite{PyrgaR08} showed that the existence of a linear support for $\famH_{\G}^*$ implies the existence of a linear $\epsilon$-net for $\famH_{\G}$, and we showed in Theorem~\ref{thm:lopsided->epsnet} and Proposition~\ref{thm:supp->Zaran} that the assumption can be weakened to assuming the existence of a hereditarily linear lopsided Zarankiewicz theorem for $\G$. The problem asks whether a similar weakening of the assumption can be made in a weaker version of our Theorem \ref{thm:2} which states that the existence of a linear support for $\famH_{\G}^*$ implies the existence of a linear $(p,2)$-theorem for $\famH_{\G}$.

\begin{problem}
    Is it true that any $K_{t,t}$-free bipartite intersection graph of two families of non-piercing regions in the plane has at most $O(tn)$ edges?
\end{problem}
The motivation behind this problem is as follows. Proposition \ref{thm:supp->Zaran} implies, in particular, the existence of a linear lopsided Zarankiewicz theorem with respect to containment of $K_{t,2}$ for the intersection graph of two families of non-piercing regions. (The existence of a planar support for the associated dual neighborhood hypergraph was proved in \cite{Raman2020}). In particular, this implies that the number of edges in a $K_{2,2}$-free bipartite intersection graph of two families of $n$ non-piercing regions is $O(n)$. 
By the leveraging technique introduced by Hunter et al.~\cite[Corollary 1.10]{HunterMTS25}, this allows deducing a bound of $O(tn)$ for the Zarankiewicz problem with respect to containment of $K_{t,t}$ if such a bipartite intersection graph admits the \emph{density Erd\H{o}s-Hajnal-property}. Can one prove that this property indeed holds? 

	

\medskip Before presenting the next open problem we need one more definition:
\begin{definition}
For a function $\varphi: \mathbb{N} \to \mathbb{N}$, we say that a hypergraph $H=(V,\E)$ has \emph{shallow cell complexity} $\varphi(m)$ if there exists $c(H)$ such that for any $m \in \mathbb{N}$ and any $l \leq m$, 
$\max_{S \subset V, |S|=m}| \{S \cap e: e \in \E\ , |S \cap e|\leq l\} \leq m \varphi(m) \cdot l^{c(H)}$ (see~\cite{ChanGKS12}).
\end{definition}
We say that a class $\famH$ of hypergraphs has shallow cell complexity $\varphi(m)$ if every $H \in \famH$ has shallow cell complexity $\varphi(m)$. 
\begin{problem}\label{Q:Shallow}
Does the existence of a hereditarily linear lopsided Zarankiewicz theorem (w.r.t. $K_{t,2}$) for a class $\G$ with a bounded VC-dimension imply that $\famH_{\G}$ has shallow cell complexity of $O(1)$? 
\end{problem}
The motivation behind this problem is as follows. Mustafa, Dutta and Ghosh~\cite{MustafaDG18}, following Chan et al.~\cite{ChanGKS12}, proved that having shallow cell complexity of $O(1)$ implies admitting a linear-sized $\epsilon$-net, and showed that all known results asserting the existence of linear $\epsilon$-nets follow from this proof strategy. In particular, this is the case for the statement that any family $\famH$ that has a linear support (or even `only' has a hereditarily linear Delaunay graph) admits a linear-sized $\epsilon$-net, as by an argument of Raman and Ray~\cite{Raman2020}, such hypergraphs have shallow cell complexity of $O(1)$.  
Hence, if the answer to Problem~\ref{Q:Shallow} is negative, then our Theorem \ref{thm:lopsided->epsnet} gives the first sufficient condition for the existence of a linear-sized $\epsilon$-net that does not follow from having shallow cell complexity of $O(1)$.

\begin{problem}\label{Q:epsnet-todual}
Does the existence of a linear $\epsilon$-net for a hypergraph $H$ imply the existence of a linear $\epsilon$-net for its dual hypergraph $H^*$?    
\end{problem}
A positive answer would imply that the assertion of Theorem~\ref{thm:2} can be strengthened to guaranteeing a \emph{linear} $(p,2)$-theorem. Indeed, Corollary~\ref{cor:3} asserts that the size of the transversal in the $(p,2)$-theorem guaranteed by Theorem~\ref{thm:2} can be reduced to $O(p)$ if in addition to the assumption that $\famH^*$ admits a hereditarily linear Delaunay graph, one assumes that $\famH$ admits a linear $\epsilon$-net theorem. As was written in the previous parargaph, the `hereditarily linear Delaunay' assumption implies that $\famH^*$ admits a linear $\epsilon$-net theorem. Hence, a positive answer to Problem~\ref{Q:epsnet-todual} would imply that the `hereditarily linear Delaunay' assumption implies the `linear $\epsilon$-net' assumption and leads to a linear $(p,2)$-theorem.

A weaker statement is that Theorem~\ref{thm:2} can be strengthened into a linear $(p,2)$-theorem (i.e., that the existence of a hereditarily linear Delaunay graph for $\famH^*$ implies a linear $(p,2)$-theorem for $\famH$), even though the answer to Problem~\ref{Q:epsnet-todual} is generally negative.\footnote{We note that the existence of a hereditarily linear Delaunay graph for $H$ does not imply the same property for $H^*$, as can be easily seen by considering the hypergraph $H_G$, where $G$ is a 1-subdivision of a complete graph.}  See the diagram in Figure \ref{fig:fig1}.

	\bibliographystyle{plain}
	\bibliography{references}
	
\appendix
\section{Linear support implies bounded VC-dimension}
\label{app}
In \cite[Theorem~4]{PyrgaR08}, Pyrga and Ray showed that if $H^*$ has a planar support (in fact, a slightly weaker condition suffices) and $H$ has a bounded VC-dimension (in fact, a slightly weaker condition suffices), then $H$ admits a linear-sized $\epsilon$-net. In this appendix we show that the second assumption is, in fact, unnecessary: the existence of a linear support for $H^*$ already implies that $H^*$ has a bounded VC-dimension, and consequently, $H$ has a bounded VC-dimension as well.

\medskip

Formally, Pyrga and Ray \cite{PyrgaR08} proved the following:

\begin{theorem} \cite[Theorem 4]{PyrgaR08}\label{thm:app}
Any hypergraph $H=(P,S)$ satisfying the following two conditions, admits an $\epsilon$-net of size $O(\frac{1}{\epsilon})$.
\begin{enumerate}
    \item $H$ has bounded VC-dimension\footnote{The original condition in \cite[Theorem 4]{PyrgaR08} was somewhat weaker: For any $0 < \epsilon<1$ and for any $P' \subset P$, the hypergraph $H|_{P'}$ admits an $\epsilon$-net whose size depends only on $\epsilon$.}.
    \item There exist constants $\alpha>0,\beta \geq 0 $ and $\tau >0$ s.t. for any $I \subset S$ there is a graph $G_I=(I,E_I)$ with $|E_I| \leq \beta |I|$ such that every dual hyperedge $p^*$ of size at least $k$, contains at least $\alpha k - \tau$ edges of $G_I$ (where we identify an edge of $G_I$ with the set of its two endpoints). 
\end{enumerate}
\end{theorem}

The following claim shows that Condition (1) follows from Condition (2).
\begin{claim}\label{cl:app}
Assume that $H=(P,S)$ is a hypergraph that satisfies the second condition of Theorem \ref{thm:app}. Then $VC$-$\dim(H^*)<m$ where $m$ is a large integer that satisfies 
\begin{itemize}
    \item $\frac{\tau+1}{\alpha}<m$;
    \item $\beta m <\frac{m^2}{2(\frac{\tau+1}{\alpha}-1)}-\frac{m}{2}.$
\end{itemize}
\end{claim}
\begin{proof}
    Assume to the contrary that there exists $I \subset S,|I|=m$ such that $$I \cap \{p^*: p^* \mbox{ is a hyperedge of } H^*\}=2^{I}.$$ In particular, every $\frac{\tau+1}{\alpha}$-subset of $I$ is obtained by an intersection of $I$ with some hyperedge $p^*$ of $H^*$. By our assumption, $I \cap p^*$ contains the two endpoints of some edge of $G_I $. Therefore the graph $G_I$ has the $\frac{\tau+1}{\alpha}$-property, namely, every $\frac{\tau+1}{\alpha}$-subset of its vertices contains some edge of it. By Lemma \ref{lem:Turan}, this implies that $|E(G_I)| \geq \frac{m^2}{2(\frac{\tau+1}{\alpha}-1)}-\frac{m}{2}>\beta m$, in contradiction to $G_I$ being $\beta$-linear.  
\end{proof}	

\end{document}